\newtheorem{thm}{Theorem}
\newtheorem{lemma}{Lemma}
\theoremstyle{definition}
\newtheorem{remark}{Remark}
\def\index#1{}
\begin{document}

\begin{frontmatter}
\pretitle{Research Article}

\title{Distance from fractional Brownian motion with associated Hurst
index $0<H<1/2$ to the subspaces of Gaussian martingales involving power
integrands with an arbitrary positive exponent}

\author[a]{\inits{O.}\fnms{Oksana}~\snm{Banna}\thanksref{cor1}\ead[label=e1]{okskot@ukr.net}\orcid{0000-0002-9730-4654}}
\thankstext[type=corresp,id=cor1]{Corresponding author.}
\author[b]{\inits{F.}\fnms{Filipp}~\snm{Buryak}\ead[label=e1]{filippburyak2000@gmail.com}}
\author[b]{\inits{Yu.}\fnms{Yuliya}~\snm{Mishura}\ead[label=e2]{myus@univ.kiev.ua}\orcid{0000-0002-6877-1800}}
\address[a]{\institution{Kyiv National Taras Shevchenko University}, Faculty of Economics,
Volodymyrska~64, 01601 Kyiv, \cny{Ukraine}}
\address[b]{\institution{Kyiv National Taras Shevchenko University}, Faculty of Mechanics and Mathematics,
Volodymyrska~64, 01601 Kyiv, \cny{Ukraine}}

\runtitle{Distance from fBm to the subspaces of Gaussian martingales}


\begin{abstract}
We find the best approximation of the fractional Brownian motion with the Hurst
index $H\in (0,1/2)$ by Gaussian martingales of the form
$\int _{0}^{t} s^{\gamma }dW_{s}$, where $W$ is a Wiener process,
$\gamma >0$.
\end{abstract}
\begin{keywords}
\kwd{Fractional Brownian motion}
\kwd{martingale}
\kwd{approximation}
\end{keywords}

\begin{keywords}[MSC2010]
\kwd{60G22}
\kwd{60G44}
\end{keywords}

\received{\sday{22} \smonth{4} \syear{2020}}
\revised{\sday{6} \smonth{6} \syear{2020}}
\accepted{\sday{6} \smonth{6} \syear{2020}}
\publishedonline{\sday{23} \smonth{6} \syear{2020}}

\end{frontmatter}
%

\section{Introduction}
\label{sec1}

The subject of the present paper is a fractional Brownian motion (fBm) \index{fractional Brownian motion}
$B^{H}=\{B_{t}^{H}, t\ge 0\}$ with the Hurst index
$H\in  \left(0, \, \frac{1}{2}\right)$.\index{Hurst index} Generally speaking, a fBm with the Hurst
index $H\in (0,1)$\index{Hurst index} is a Gaussian process with zero mean and the covariance
function of the form
\begin{equation*}
{\mathrm{E}}B_{t}^{H} B_{s}^{H} =\frac{1}{2}(t^{2H}+s^{2H}-\vert t-s\vert ^{2H}).
\end{equation*}
Its properties are rather different for
$H\in \left (0, \, \frac{1}{2}\right )$ and
$H\in \left (\frac{1}{2}, \, 1\right )$. In particular,
$H\in \left (0, \, \frac{1}{2}\right )$ implies short-term dependence.
In contrast, $H\in \left (\frac{1}{2}, \, 1\right )$ implies long-term
dependence. Moreover, technically it is easier to deal with fBms having
$H\in \left (\frac{1}{2}, \, 1\right )$. Due to this and many other reasons,
fBm with $H\in \left (\frac{1}{2}, \, 1\right )$ has been much more intensively
studied in the recent years. However, the financial markets in which trading
takes place quite often, demonstrate the presence of a short memory, and
therefore the volatility in such markets (so called rough volatility) is
well modeled by fBm with $H\in \left (0, \, \frac{1}{2}\right )$, see e.g.
\cite{Volatility}. Thus interest to fBm with small Hurst indices\index{Hurst index} has substantially
increased recently. Furthermore, it is well known that a fractional Brownian
motion\index{fractional Brownian motion} is neither a Markov process nor semimartingale, and especially it
is neither martingale nor a process with independent increments\index{independent increments} unless
$H=\frac{1}{2}$. That is why it is naturally to search the possibility
of the approximation of fBm in a certain metric by simpler processes, such
as Markov processes, martingales, semimartingales or a processes of bounded
variation. As for the processes of bounded variation and semimartingales,
corresponding results are presented in \cite{Andr05,Andr06} and
\cite{Thao03}. In the papers
\cite{BM08,BMishura10,MishuraB08,5stars} approximation of a fractional
Brownian motion\index{fractional Brownian motion} with Gaussian martingales\index{Gaussian martingales} was studied and summarized in
the monograph \cite{MonoBMRS19}, but most of problems were considered only
for $H\in \left (\frac{1}{2}, \, 1\right )$, for the reasons stated above.

In the present paper we continue to consider the approximation of a fractional
Brownian motion\index{fractional Brownian motion} by Gaussian martingales\index{Gaussian martingales} but concentrate on the case
$H\in \left (0, \frac{1}{2}\right )$.

Let $(\Omega , \mathcal{F}, \mathrm{P})$ be a complete probability space
with a filtration $\mathbb{F} = \{ \mathcal{F}_{t}\}_{t\geq 0}$ satisfying
the standard assumptions. We start with the Molchan representation\index{Molchan representation} of fBm
via the Wiener process\index{Wiener process} on a finite interval. Namely, it was established
in \cite{Norros} that the fBm
$\{B_{t}^{H} , \mathcal{F}_{t}, \,t\geq 0\}$ can be represented as
%
\begin{equation}
\label{BtHK}
B_{t}^{H} =\int \limits _{0}^{t} {z(t,s)} dW_{s},
\end{equation}
where $\{W_{t} ,\,\,t\in [0,\,T]\}$ is a Wiener process,\index{Wiener process}
\begin{equation*}
\begin{gathered}
z(t,s)=c_{H} \bigg (t^{H-1/2} s^{1/2-H}(t-s)^{H-1/2}
\\
-(H-1/2) s^{1/2-H} \int \limits _{s}^{t} u^{H-3/2}(u-s)^{H-1/2}du
\bigg ),
\end{gathered}
\end{equation*}
is the Molchan kernel,\index{Molchan kernel}
%
\begin{equation}
\label{cH}
c_{H} =\left (
\frac{2H\cdot \Gamma (\frac{3}{2}-H)}{\Gamma (H+\frac{1}{2})\cdot \Gamma (2-2H)}
\right )^{1/2},
\end{equation}
and $\Gamma (x)$, $x>0$, is the Gamma function.\index{Gamma function}

Let us consider a problem of the distance between a fractional Brownian motion\index{fractional Brownian motion}
and the space of square integrable martingales (initially not obligatory
Gaussian), adapted to the same filtration. So, we are looking for a square
integrable $\mathbb{F}$-martingale $M$ with the bracket that is absolutely
continuous w.r.t. (with respect to) the Lebesgue measure such that it minimizes
the value
\begin{equation*}
\rho _{H}(M)^{2}:=\sup _{t\in [0,T]} {\mathrm{E}}(B_{t}^{H}-M_{t})^{2}.
\end{equation*}

We observe first that $B^{H}$ and $W$ generate the same filtration, so
any square integrable $\mathbb{F}$-martingale $M$ with the bracket that
is absolutely continuous w.r.t. the Lebesgue measure, admits a representation
%
\begin{equation}
\label{itorep}
M_{t} = \int _{0}^{t} a(s) dW_{s},
\end{equation}
where $a$ is an $\mathbb{F}$-adapted square integrable process such that
$\langle M\rangle _{t}=\int _{0}^{t}a^{2}(s)ds$. Hence we can write, see
\cite{5stars},
\begin{align*}
{\mathrm{E}}(B_{t}^{H} - M_{t})^{2} &= {\mathrm{E}}\left (\int _{0}^{t} (z(t,s)-a(s))
dW_{s}\right )^{2} = \int _{0}^{t} {\mathrm{E}}(z(t,s)-a(s))^{2} ds
\\
&= \int _{0}^{t} (z(t,s)- {\mathrm{E}} a(s))^{2} ds + \int _{0}^{t}
\mathrm{Var}\; a(s) ds.
\end{align*}
Consequently, it is enough to minimize $\rho _{H}(M)$ over continuous
\emph{Gaussian} martingales.\index{continuous Gaussian martingales} Such martingales have orthogonal and therefore
independent increments.\index{independent increments} Then the fact that they have representation \eqref{itorep} with a non-random $a$ follows, e.g., from
\cite{Skorohod}.

Now let $a:[0,T]\to \mathbb{R}$ be a nonrandom measurable function of the
class $L_{2} [0,T]$; that is, $a$ is such that the stochastic integral
$\int \limits _{0}^{t} {a(s)dW_{s} } $, $t\in [0,\;T]$, is well defined
w.r.t. the Wiener process $\{W_{t} ,\;\;t\in [0,\;T]\}$\index{Wiener process} (this integral
is usually called the Wiener integral if the integrand is a nonrandom function).
So, the problem is to find
\begin{equation*}
\mathop{\inf }\limits _{a\in L_{2} [0,T]} \mathop{\sup }\limits _{0
\le t\le T} {\mathrm{E}}\left (B_{t}^{H} -\int \limits _{0}^{t} {a(s)dW_{s}
} \right )^{2}= \mathop{\inf }\limits _{a\in L_{2} [0,T]} \mathop{
\sup }\limits _{0\le t\le T} \int \limits _{0}^{t} (z(t,s)- a(s) )^{2}
ds .
\end{equation*}

Note that the expression
to be minimized does not involve neither the fractional
Brownian motion\index{fractional Brownian motion} nor the Wiener process,\index{Wiener process} so the problem becomes purely analytic.
Moreover, since the problem posed in a general form is not observable and
accessible for solution, we restrict ourselves to one particular subclass
of functions. We study the class
\begin{equation*}
\{a(s)=s^{\gamma }, \gamma >0\}.
\end{equation*}
Our main result is Theorem \ref{thdistpowf}, which shows where
$\mathop{\max }\limits _{t\in [0, 1]} {\mathrm{E}}\left (B^{H}_{t} - \int _{0}^{t}
a(s) dW_{s} \right )^{2}$ could be reached, depending on $\gamma >0$. We
also provide remarks after the theorem.

\section{Distance from fBm with $H\in (0, 1/2)$ to the subspaces of Gaussian martingales\index{Gaussian martingales} involving power integrands}
\label{sec2}

Consider a class of power functions with an arbitrary positive exponent.
Thus, we now introduce the class
\begin{equation*}
\{a(s)=s^{\gamma }, \gamma >0\}.
\end{equation*}
For the sake of simplicity, let $T = 1$.

\begin{thm}%
\label{thdistpowf}
Let $a=a(s)$ be a function of the form $a(s)=s^{\gamma }$, $\gamma >0$,
$H \in (0, 1/2)$. Then:
\begin{itemize}
\item[\upshape(\textit{i})] For all $\gamma >0$ the maximum
$\mathop{\max }\limits _{t\in [0, 1]} {\mathrm{E}}\left (B^{H}_{t} - \int _{0}^{t}
s^{\gamma }dW_{s} \right )^{2}$ is reached at one of the following points:
$t=1$ or $t=t_{1}$, where
\begin{align*}
t_{1} &= \Bigg (\;c_{H} B\left (\gamma -H+\frac{3}{2}, H+\frac{1}{2}
\right )(\gamma +1)
\\
&-\sqrt{c_{H}^{2}\left (B\left (\gamma -H+\frac{3}{2}, H+\frac{1}{2}
\right )(\gamma +1)\right )^{2}-2H}\;\; \Bigg )^{
\frac{1}{\gamma -H+\frac{1}{2}}}.
\end{align*}
\item[\upshape(\textit{ii})] For any $H\in (0,1/2)$ there exists
$\gamma _{0}=\gamma _{0}(H)> 0$ such that for $\gamma >\gamma _{0}$ the maximum
\begin{align*}
&\mathop{\max }\limits _{t\in [0, 1]} {\mathrm{E}}\left (B^{H}_{t} - \int _{0}^{t}
s^{\gamma }dW_{s} \right )^{2}
\\
&= t_{1}^{2H}-2 t_{1}^{\gamma +\frac{1}{2}+H} c_{H} B\left (\gamma -H+
\frac{3}{2},H+\frac{1}{2}\right ) \frac{\gamma +1}{\gamma +\frac{1}{2}+H}+
\frac{1}{2\gamma +1} t_{1}^{2\gamma +1}
\end{align*}
and is reached at the point $t_{1}$. Here $B(x,y)$, $x,y>0$, is a beta function.
\end{itemize}

\end{thm}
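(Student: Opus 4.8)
The plan is to work directly with the analytic quantity
$f(t) := \int_{0}^{t}\bigl(z(t,s)-s^{\gamma}\bigr)^{2}\,ds = {\mathrm{E}}\bigl(B^{H}_{t}-\int_{0}^{t}s^{\gamma}\,dW_{s}\bigr)^{2}$
on $[0,1]$ and to locate its maximum by calculus. First I would expand the square and use the basic identity $\int_{0}^{t}z(t,s)^{2}\,ds = {\mathrm{E}}(B^{H}_{t})^{2} = t^{2H}$ together with the cross term $\int_{0}^{t}z(t,s)s^{\gamma}\,ds$, which can be evaluated in closed form: since $\int_{0}^{t}z(t,s)\,g(s)\,ds = {\mathrm{E}}\bigl(B^{H}_{t}\int_{0}^{t}g(s)\,dW_{s}\bigr)$ and fractional calculus gives the action of the Molchan kernel on a power, one obtains $\int_{0}^{t}z(t,s)s^{\gamma}\,ds = c_{H}\,B\bigl(\gamma-H+\tfrac{3}{2},H+\tfrac{1}{2}\bigr)\,\dfrac{\gamma+1}{\gamma+H+\tfrac{1}{2}}\,t^{\gamma+H+\frac{1}{2}}$; the last term $\int_{0}^{t}s^{2\gamma}\,ds=\dfrac{t^{2\gamma+1}}{2\gamma+1}$ is trivial. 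This yields the explicit formula
\[
f(t)=t^{2H}-2c_{H}B\Bigl(\gamma-H+\tfrac{3}{2},H+\tfrac{1}{2}\Bigr)\frac{\gamma+1}{\gamma+H+\frac{1}{2}}\,t^{\gamma+H+\frac12}+\frac{1}{2\gamma+1}\,t^{2\gamma+1},
\]
which already matches the value claimed in part~(ii) once we know the maximum is at $t_1$.

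Next, for part~(i), I would differentiate: $f'(t) = 2H\,t^{2H-1} - 2c_{H}B\bigl(\gamma-H+\tfrac32,H+\tfrac12\bigr)(\gamma+1)\,t^{\gamma+H-\frac12} + t^{2\gamma}$. Since $f$ is continuous on the compact interval $[0,1]$, its maximum is attained either at an endpoint or at an interior critical point. Because $f(0)=0$ and $f(t)>0$ for $t>0$ small (the $t^{2H}$ term dominates), the maximum is not at $t=0$. Factoring out $t^{2H-1}$ from $f'(t)$ leaves $g(t):=2H - 2c_{H}B\bigl(\gamma-H+\tfrac32,H+\tfrac12\bigr)(\gamma+1)\,t^{\gamma-H+\frac12} + t^{2\gamma-2H+1}$; writing $u = t^{\gamma-H+\frac12}$ (a legitimate increasing change of variable since $\gamma-H+\frac12>0$), $g$ becomes a quadratic $2H - 2c_{H}B(\cdots)(\gamma+1)u + u^{2}$ in $u$, with two positive roots $u_{\pm} = c_{H}B(\cdots)(\gamma+1)\mp\sqrt{c_{H}^{2}(B(\cdots)(\gamma+1))^{2}-2H}$ when the discriminant is nonnegative. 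The smaller root $u_{-}$ corresponds exactly to $t_{1}$ as stated. Analyzing the sign of the quadratic shows $f'>0$ on $(0,t_1)$, $f'<0$ on $(t_1,u_+^{1/(\gamma-H+1/2)})$, and $f'>0$ again afterwards, so the only candidates for the global maximum on $[0,1]$ are $t=t_{1}$ (a local max) and $t=1$ (the right endpoint); when the discriminant is negative, $f'\geq 0$ throughout and the maximum is at $t=1$, which we can absorb into the $t=1$ alternative. This proves~(i).

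For part~(ii), I would compare $f(t_{1})$ with $f(1)$ as $\gamma\to\infty$. The key observation is asymptotic: as $\gamma\to\infty$, $c_{H}B\bigl(\gamma-H+\tfrac32,H+\tfrac12\bigr)(\gamma+1)\to 0$ (the beta function decays like $\Gamma(H+\tfrac12)\gamma^{-(H+1/2)}$ while the prefactor grows only linearly, so the product $\to 0$), hence for $\gamma$ large the discriminant $c_{H}^{2}(B(\cdots)(\gamma+1))^{2}-2H$ is negative — wait, that would put us in the no-interior-critical-point regime; instead the correct reading is that $t_{1}\to 1^{-}$ or rather one must examine $t_1$ more carefully. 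The honest version: one shows that for $\gamma>\gamma_0$ the discriminant is nonnegative (using that the product tends to a value whose square exceeds $2H$ — this requires checking the precise growth, which is the delicate point), that $t_{1}<1$, and that $f(t_{1})>f(1)$ by direct estimation of the three-term expression, exploiting that near $\gamma\to\infty$ the dominant balance makes $f(t_1)$ strictly larger. \textbf{The main obstacle} is precisely this last comparison: establishing the existence of the threshold $\gamma_{0}(H)$ requires careful asymptotics of the beta function $B(\gamma-H+\tfrac32,H+\tfrac12)$ as $\gamma\to\infty$ via Stirling's formula, and then a monotonicity or sign argument to show that $f(t_{1})-f(1)$ changes sign exactly once; the endpoint value $f(1)=1-2c_{H}B(\cdots)\tfrac{\gamma+1}{\gamma+H+1/2}+\tfrac{1}{2\gamma+1}$ tends to $1$ while $f(t_1)$ must be shown to exceed it, which is where the bulk of the technical work lies.
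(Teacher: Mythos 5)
Your reduction to the quadratic $u^{2}-2d_{H}(\gamma)u+2H=0$ with $u=t^{\gamma-H+\frac{1}{2}}$, where $d_{H}(\gamma)=c_{H}B(\gamma-H+\frac{3}{2},H+\frac{1}{2})(\gamma+1)$, and the ensuing sign analysis of the derivative are exactly the paper's route to part (i). The genuine gaps are concentrated in the quantitative behaviour of $d_{H}(\gamma)$, and they sink part (ii). First, your asymptotics are backwards: writing $B(\gamma-H+\frac{3}{2},H+\frac{1}{2})(\gamma+1)=\Gamma(H+\frac{1}{2})\,\Gamma(\gamma-H+\frac{3}{2})/\Gamma(\gamma+1)$ and applying Stirling's formula, this ratio grows like $\gamma^{\frac{1}{2}-H}\to\infty$ because $H<\frac{1}{2}$; so $d_{H}(\gamma)\to+\infty$, not $0$. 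This is not a side issue --- it is the whole mechanism behind part (ii): once $d_{H}(\gamma)>\frac{1}{2}+H$, the larger root satisfies $x_{2}=d_{H}+\sqrt{d_{H}^{2}-2H}>1$, so the point $t=1$ lies strictly inside the interval $(t_{1},t_{2})$ on which $h(\cdot,\gamma)$ is decreasing; hence the maximum on $[0,1]$ is attained at the interior critical point $t_{1}$ with no need to compare $h(t_{1},\gamma)$ against $h(1,\gamma)$ at all. Since $d_{H}$ is increasing in $\gamma$ and tends to infinity, the threshold $\gamma_{0}(H)$ is simply determined by $d_{H}(\gamma_{0})=\frac{1}{2}+H$. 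Your proposed alternative --- a direct asymptotic comparison of $f(t_{1})$ with $f(1)$ --- is both unnecessary and, as you yourself acknowledge, not carried out; built on the wrong limit for $d_{H}$, it cannot be repaired as written.

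Second, for part (i) as stated you need the discriminant $d_{H}^{2}(\gamma)-2H\ge 0$ for \emph{all} $\gamma>0$, since otherwise $t_{1}$ is not even a real number; you cannot simply absorb the negative-discriminant case into the $t=1$ alternative, because the theorem asserts the maximum is at $t=1$ \emph{or at the specific point} $t_{1}$. The paper proves $D(\gamma)>0$ for all $\gamma>0$ by first showing $D(0)>0$, which reduces to $\Gamma(H+\frac{1}{2})\Gamma(\frac{3}{2}-H)^{3}/\Gamma(2-2H)>1$ on $(0,\frac{1}{2})$ (via the digamma integral representation and the factorization $3z^{2}-2z^{3}-1=-(1-z)^{2}(2z+1)$), and then showing $D(\gamma)$ is increasing in $\gamma$ using the log-convexity of the Gamma function. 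Some argument of this kind is required to close part (i); your write-up omits it entirely.
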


\begin{proof}
According to Lemma 2.20 \cite{MonoBMRS19}, the distance between the fractional
Brownian motion\index{fractional Brownian motion} and the integral $\int _{0}^{t}s^{\gamma }dW_{s}$ w.r.t.
Wiener process\index{Wiener process} $t\in [0, 1]$ equals
%
\begin{align}
E &\left (B^{H}_{t} - \int _{0}^{t} s^{\gamma }dW_{s}\right )^{2} = E
\left (B^{H}_{t} \right )^{2}-2E\left (\int _{0}^{t} z(t,s) dW_{s}
\int _{0}^{t} s^{\gamma }dW_{s} \right )
\nonumber
\\
&+E\left (\int _{0}^{t} s^{\gamma }dW_{s}\right )^{2} = t^{2H}-2\int _{0}^{t}
z(t,s)s^{\gamma } ds + \int _{0}^{t} s^{2\gamma } ds
\nonumber
\\
&=t^{2H} - 2t^{\gamma +H+\frac{1}{2}}c_{H}B\left (\gamma -H+
\frac{3}{2},H+\frac{1}{2}\right )
\frac{\gamma +1}{\gamma +H+\frac{1}{2}}
\nonumber
\\
&+\frac{t^{2\gamma +1}}{2\gamma +1} := h(t,\gamma ),
\label{htgam}
\end{align}
where $c_{H}$ is taken from \eqref{cH}.\vadjust{\goodbreak}

Let us calculate the partial derivative of $h(t,\gamma )$ in $t$:
\begin{align*}
\frac{\partial h(t,\gamma )}{\partial t} = t^{2H-1}&\Big (2H-2t^{
\gamma -H+\frac{1}{2}}c_{H}
\\
&\cdot B\left (\gamma -H+\frac{3}{2},H+\frac{1}{2}\right )(\gamma +1)+t^{2(
\gamma -H+\frac{1}{2})}\Big ).
\end{align*}

Let us verify whether there is $t\in [0, 1]$ such that
$\frac{\partial h(t,\gamma )}{\partial t}=0$, i.e.
\begin{equation*}
t^{2(\gamma -H+\frac{1}{2})} -2t^{\gamma -H+\frac{1}{2}}c_{H} B\left (
\gamma -H+\frac{3}{2},H+\frac{1}{2}\right )(\gamma +1)+ 2H=0.
\end{equation*}

Changing the variable $t^{\gamma -H+\frac{1}{2}}=: x$, we obtain the following
quadratic equation:
%
\begin{equation}
\label{eqquadrat}
x^{2}-2xc_{H} B\left (\gamma -H+\frac{3}{2},H+\frac{1}{2}\right )(
\gamma +1)+2H=0.
\end{equation}

The discriminant $D=D(\gamma )$ of the quadratic equation \eqref{eqquadrat} equals
%
\begin{align}
D(\gamma ) &=4c_{H}^{2}\left (B\left (\gamma -H+\frac{3}{2},H+
\frac{1}{2}\right )(\gamma +1)\right )^{2}-8H
\nonumber
\\
&= 8H\left (\left (B\left (\gamma -H+\frac{3}{2},H+\frac{1}{2}\right )(
\gamma +1)\right )^{2}
\frac{\Gamma (\frac{3}{2}-H)}{\Gamma (H+\frac{1}{2})\Gamma (2-2H)}-1
\right )
\nonumber
\\
&= 8H\left (
\frac{\Gamma (H+\frac{1}{2})\Gamma (\frac{3}{2}-H)}{\Gamma (2-2H)}
\left (\frac{\Gamma (\gamma -H+\frac{3}{2})}{\Gamma (\gamma +1)}
\right )^{2}-1\right ).
\label{eqdiscr}
\end{align}

Now we are going to show that $D(0)>0$ and $D(\gamma )$ is increasing in $\gamma >0$. For this we study separately the function
$ f(H):=
\frac{\Gamma (H+\frac{1}{2})(\Gamma (\frac{3}{2}-H))^{3}}{\Gamma (2-2H)}$
for $H \in (0,\frac{1}{2})$.

\begin{figure}[h]
\includegraphics{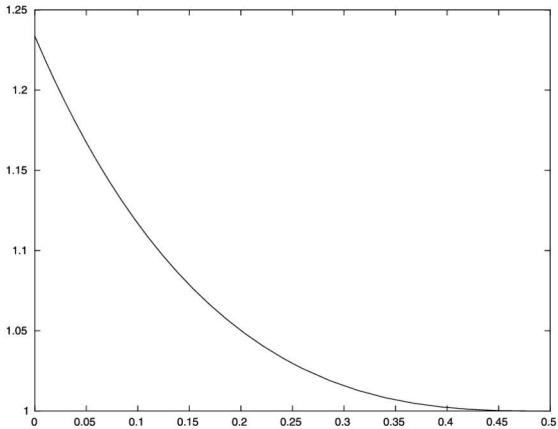}
\caption{The behavior of $f(H)$ for $H \in (0,\frac{1}{2})$}
\label{fig:fH}
\end{figure}
\begin{figure}[h]
\includegraphics{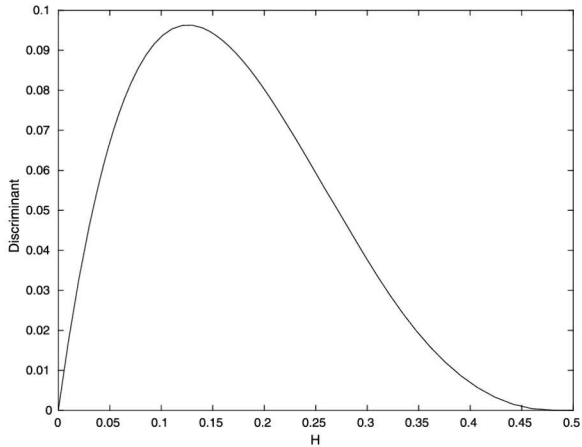}
\caption{The behavior of $D(0)$ as a function of $H \in (0,\frac{1}{2})$}
\label{fig:D0}
\end{figure}

Let us calculate the values of this function at the following points:
$f(0)=\frac{\pi ^{2}}{8}>1$, $\smallskip f(\frac{1}{2})=1$. To establish that
$f(H)$ is decreasing in $H$, consider Lemma \ref{lem:gamder} and the following calculation:
%
\begin{align}
(\ln f(H))_{H}' &=\left (\ln
\frac{\Gamma (H+\frac{1}{2})(\Gamma (\frac{3}{2}-H))^{3}}{\Gamma (2-2H)}
\right )_{H}'
\nonumber
\\
&= \left (\ln \Gamma \left (H+\frac{1}{2}\right )+3\ln \Gamma \left (
\frac{3}{2}-H\right )-\ln \Gamma \left (2-2H\right )\right )_{H}'
\nonumber
\\
&= \frac{\Gamma '(H+\frac{1}{2})}{\Gamma (H+\frac{1}{2})}-3
\frac{\Gamma '(\frac{3}{2}-H)}{\Gamma (\frac{3}{2}-H)}+2
\frac{\Gamma '(2-2H)}{\Gamma (2-2H)}
\nonumber
\\
&= \int _{0}^{1} \frac{1-t^{H-\frac{1}{2}}}{1-t} dt-C-3\left (\int _{0}^{1}
\frac{1-t^{\frac{1}{2}-H}}{1-t} dt-C\right )
\nonumber
\\
&\qquad +2\left (\int _{0}^{1} \frac{1-t^{1-2H}}{1-t} dt-C\right )
\nonumber
\\
&= \int _{0}^{1}
\frac{1-t^{H-\frac{1}{2}}-3+3t^{\frac{1}{2}-H}+2-2t^{1-2H}}{1-t} dt
\nonumber
\\
&= \int _{0}^{1} \frac{3t^{\frac{1}{2}-H}-2t^{1-2H}-t^{H-\frac{1}{2}}}{1-t} dt
\nonumber
\\
&=\int _{0}^{1}
\frac{t^{H-\frac{1}{2}}(3t^{1-2H}-2t^{\frac{3}{2}-3H}-1)}{1-t} dt.
\label{eqlnfh}
\end{align}

Let $t \in (0,1)$. Obviously, in this case $t^{H- \frac{1}{2}}>0$ and
$ 1-t >0$. Changing the variables in \eqref{eqlnfh} as
$z:=t^{\frac{1}{2}-H}$, we get
\begin{equation*}
3z^{2}-2z^{3}-1= -(1-z)^{2}(2z+1),
\end{equation*}
and this function is negative for all $z \in (0,1)$. Hence,
$(\ln f(H))'<0 $ and it means that $f(H)$ is decreasing. Furthermore,
$ f(H)>1$ for every $H \in (0,\frac{1}{2})$. The behavior of $ f(H)$ is presented
in Figure \ref{fig:fH}.

So, we proved that $D(0)>0$ (the behavior of $D(0)$ as a function of
$H$ is presented in Figure \ref{fig:D0}), and it follows from Lemma~\ref{lem:z-gam}
that $D(\gamma )$ is increasing in $\gamma >0$ for any
$H\in (0,1/2)$. Therefore, for every $H \in (0,\frac{1}{2})$ and
$\gamma >0$ we have that the quadratic equation \eqref{eqquadrat} has two
roots.

More precisely, if you use standard notations for the coefficients of the
quadratic equation, then coefficient $a$ at $x^{2}$ in \eqref{eqquadrat} is strongly positive, $a=1$, coefficient at $x$ equals
$b=-2c_{H}B(\gamma -H+\frac{3}{2},H+\frac{1}{2})(\gamma +1)$ and is negative,
and $c=2H>0$. We conclude that our quadratic equation has two positive
roots $x_{1}=\frac{-b-\sqrt{b^{2}-4ac}}{2a}$ and
$x_{2}=\frac{-b+\sqrt{b^{2}-4ac}}{2a}$, $x_{1}\le x_{2}$.

According to our notations, we let
$t_{i} :=x_{i}^{\frac{1}{\gamma -H+\frac{1}{2}}}$, $i=1,2$. Since
$x=t^{\gamma -H+\frac{1}{2}}\in [0,1]$ for $t \in [0,1]$ and the left-hand side
of \eqref{eqquadrat} is negative for $x \in (x_{1},x_{2})$, we get the
following cases:
\begin{itemize}
\item[(\textit{i})] Let $x_{1}<1$ and $x_{2}<1$. Then
$\max \limits _{t \in [0,1]}h(t,\gamma )$ can be achieved at one of two
points: $t=t_{1}$ or $t= 1$.
\item[(\textit{ii})] Let $x_{1}<1$ and $x_{2}\ge 1$. Then
$\max \limits _{t \in [0,1]}h(t,\gamma )$ is achieved at point
$t=t_{1}$.
\item[(\textit{iii})] Let $x_{1}\ge 1$ (and consequently $x_{2}>1$). Then
$\max \limits _{t \in [0,1]}h(t,\gamma )$ is achieved at point
$t= 1$.
\end{itemize}
Now, we rewrite the discriminant \eqref{eqdiscr} in the following form:
%
\begin{align}
D &=4\left (c_{H}^{2}\left (B\left (\gamma -H+\frac{3}{2},H+
\frac{1}{2}\right )(\gamma +1)\right )^{2}-2H\right )
\nonumber
\\
&=: 4(d_{H}^{2}(\gamma )-2H)>0,
\label{eq:Dtr}
\end{align}
where
$d_{H}(\gamma )=c_{H}B(\gamma -H+\frac{3}{2},H+\frac{1}{2})(\gamma +1)$.
From Lemma \ref{lem:d_H}, $x_{1}<\sqrt{2H}<1$, so case (\textit{iii}) never occurs.

According to Lemma \ref{lem:d_H}, the biggest of two roots,
$x_{2}=d_{H}(\gamma )+\sqrt{d_{H}^{2}(\gamma )-2H}$, is increasing in
$\gamma >0$ and $x_{2}>1 \Leftrightarrow d_{H}>\frac{1}{2}+H$. Moreover, it
follows from Lemma \ref{lem:d_H}, (\textit{iv}) that
$d_{H}(\gamma ) \to +\infty $ as $\gamma \to +\infty $. Therefore, for
all $H \in (0,\frac{1}{2})$ there exists $\gamma _{0}(H)>0$ such that for all
$\gamma >\gamma _{0}$ we have $x_{2}>1$.

It means that for $\gamma >\gamma _{0}$ our maximum is reached at the point
$t_{1} = (x_{1})^{\frac{1}{\gamma +\frac{1}{2}-H}}$. Finally,
\begin{align*}
&\mathop{\max }\limits _{t\in [0, 1]} {\mathrm{E}}\left (B^{H}_{t} - \int _{0}^{t}
s^{\gamma }dW_{s} \right )^{2}
\\
&= t_{1}^{2H}-2 t_{1}^{\gamma +\frac{1}{2}+H} c_{H} B\left (\gamma -H+
\frac{3}{2},H+\frac{1}{2}\right ) \frac{\gamma +1}{\gamma +\frac{1}{2}+H}+
\frac{1}{2\gamma +1} t_{1}^{2\gamma +1},
\end{align*}
where $t_{1} = (x_{1})^{\frac{1}{\gamma +\frac{1}{2}-H}}$.
\end{proof}

\begin{remark}%
\label{rem:gam0curve}%
The implicit equation $d_{H}(\gamma )=\frac{1}{2}+H$ considered as the equation
for $\gamma _{0}$ as a function of $H$ gives us the relation between
$H \in \left (0,\frac{1}{2}\right )$ and respective $\gamma _{0}>0$ which,
by virtue of the foregoing, is determined unambiguously. The form of the
algebraic curve $\gamma _{0}=\gamma _{0}(H)$\index{algebraic curve} is presented in Figure \ref{fig:DH}.
%
\begin{figure}[h]
\includegraphics{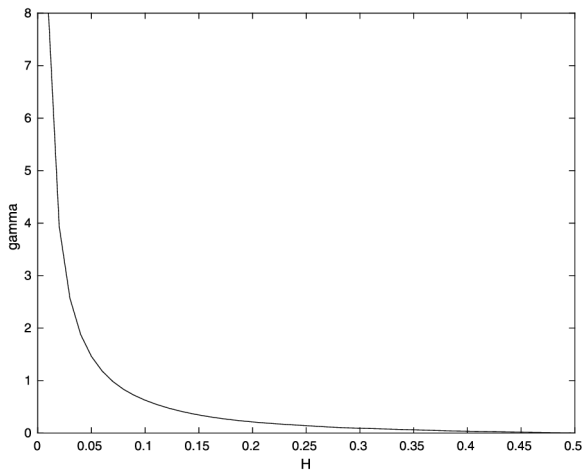}
\caption{The algebraic curve $\gamma _{0}=\gamma _{0}(H)$}
\label{fig:DH}
\end{figure}
\end{remark}

Consider one of the coefficients that are present in \eqref{htgam},
namely,
\begin{align*}
c^{1}_{H,\gamma } &=2c_{H}B\left (\gamma -H+\frac{3}{2},H+\frac{1}{2}\right )
\frac{\gamma +1}{\gamma +H+\frac{1}{2}}
\\
&= 2\left (
\frac{2H\Gamma \left (\frac{3}{2}-H\right )}{\Gamma \left (H+\frac{1}{2}\right )\Gamma \left (2-2H\right )}
\right )^{1/2}
\frac{\Gamma \left (\gamma -H+\frac{3}{2}\right )\Gamma \left (H+\frac{1}{2}\right )}{\Gamma \left (\gamma +1\right )\left (\gamma +H+\frac{1}{2}\right )}.
\end{align*}
Let $\gamma =0$. Then
\begin{align*}
c^{1}_{H,0} &=2\left (
\frac{2H\Gamma \left (\frac{3}{2}-H\right )}{\Gamma \left (H+\frac{1}{2}\right )\Gamma \left (2-2H\right )}
\right )^{1/2}
\frac{\Gamma \left (\frac{3}{2}-H\right )\Gamma \left (H+\frac{1}{2}\right )}{H+\frac{1}{2}}
\\
&= \frac{2}{H+\frac{1}{2}}\left (
\frac{2H\Gamma \left (\frac{3}{2}-H\right )}{\Gamma \left (H+\frac{1}{2}\right )\Gamma \left (2-2H\right )}
\right )^{1/2}
\frac{\pi \left (\frac{1}{2}-H\right )}{\sin \left (\pi \left (\frac{1}{2}-H\right )\right )}.
\end{align*}
For $H=\frac{1}{2}$, one has $c_{1/2,0}^{1}=2$. Obviously,
$c_{H,0}^{1} \rightarrow 0$ as $H \downarrow 0$. It means that
$c_{H,0}^{1}$ is small in some neighborhood of zero. Having this in mind,
we establish some sufficient condition for $h(t,0)$ to get its maximum
at point 1.

\begin{lemma}%
\label{lem:sufmaxat1}
Let $c_{H,0}^{1}<1$. Then $h(t_{1},0)<h(1,0)$.
\end{lemma}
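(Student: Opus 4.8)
The plan is to work directly with the explicit expression \eqref{htgam} at $\gamma=0$, where it reads
\[
h(t,0) = t^{2H} - c^1_{H,0}\,t^{H+\frac12} + t ,
\]
because the coefficient of $t^{H+1/2}$ there equals exactly $c^1_{H,0}=2c_H B(\frac32-H,H+\frac12)/(H+\frac12)$. Consequently $h(1,0)=2-c^1_{H,0}$, and the hypothesis $c^1_{H,0}<1$ gives $h(1,0)>1$. So it suffices to prove the bound $h(t_1,0)<1$.

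To this end I would use that $t_1$ is the interior critical point coming from \eqref{eqquadrat}. Setting $x_1=t_1^{1/2-H}$ (the smaller root), the quadratic \eqref{eqquadrat} at $\gamma=0$ is $x^2-c^1_{H,0}(H+\frac12)x+2H=0$ (note $2c_HB(\frac32-H,H+\frac12)=c^1_{H,0}(H+\frac12)$); Vieta's formulas give $x_1x_2=2H$, and since $x_1<x_2$ we get $0<x_1<\sqrt{2H}<1$, in agreement with Lemma \ref{lem:d_H}. From the equation evaluated at $x_1$ we also get $c^1_{H,0}x_1=\frac{x_1^2+2H}{H+1/2}$. Rewriting the powers of $t_1$ through $x_1$, namely $t_1^{2H}=x_1^{4H/(1-2H)}$, $t_1^{H+1/2}=x_1\cdot x_1^{4H/(1-2H)}$ and $t_1=x_1^2\cdot x_1^{4H/(1-2H)}$, and factoring out $q:=x_1^{4H/(1-2H)}>0$, we obtain $h(t_1,0)=q\,(1-c^1_{H,0}x_1+x_1^2)$. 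Substituting the relation $c^1_{H,0}x_1=\frac{x_1^2+2H}{H+1/2}$ collapses the bracket to $\frac{(\frac12-H)(1-x_1^2)}{H+1/2}$, so that
\[
h(t_1,0)=x_1^{\frac{4H}{1-2H}}\cdot\frac{(\frac12-H)(1-x_1^2)}{H+\frac12}.
\]

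Now each of the three factors on the right lies strictly in $(0,1)$: the first because $0<x_1<1$ and the exponent $4H/(1-2H)$ is positive; the second because $H>0$ makes $\frac12-H<\frac12+H$; the third because $0<x_1<1$. Hence $h(t_1,0)<1<h(1,0)$, which is the claim (and, combined with part (i) of Theorem \ref{thdistpowf}, it shows the maximum of $h(\cdot,0)$ on $[0,1]$ sits at $t=1$).

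The computation is routine; the one point deserving care is the algebraic collapse of $1-c^1_{H,0}x_1+x_1^2$ using the quadratic identity satisfied by $x_1$, together with the elementary but essential observation $x_1<1$ (via Vieta, or via Lemma \ref{lem:d_H}): it is precisely this that makes all three factors subunitary, and without it the estimate would not close.
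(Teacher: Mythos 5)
Your proof is correct, but it takes a genuinely different route from the paper's. The paper compares $h(t_{1},0)$ and $h(1,0)$ directly: it rewrites the desired inequality as $c_{H,0}^{1}\bigl(1-t_{1}^{1/2+H}\bigr)<2-t_{1}-t_{1}^{2H}$ and closes it with the elementary chain
\begin{equation*}
c_{H,0}^{1}\bigl(1-t_{1}^{1/2+H}\bigr)<1-t_{1}^{1/2+H}<1-t_{1}<2-t_{1}-t_{1}^{2H},
\end{equation*}
using only $0<t_{1}<1$ and $\tfrac12+H<1$. You instead insert the barrier $1$ between the two values: you use the quadratic identity $c_{H,0}^{1}x_{1}=\frac{x_{1}^{2}+2H}{H+1/2}$ satisfied by $x_{1}=t_{1}^{1/2-H}$ to collapse $h(t_{1},0)$ to the exact closed form $x_{1}^{4H/(1-2H)}\cdot\frac{(\frac12-H)(1-x_{1}^{2})}{\frac12+H}$, each factor of which lies in $(0,1)$, while $h(1,0)=2-c_{H,0}^{1}>1$ follows from the hypothesis. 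I checked the algebra (the exponent bookkeeping $t_{1}^{2H}=x_{1}^{4H/(1-2H)}$, $t_{1}^{H+1/2}=x_{1}^{(1+2H)/(1-2H)}$, $t_{1}=x_{1}^{2/(1-2H)}$, and the collapse of the bracket) and it is correct; the bound $0<x_{1}<\sqrt{2H}<1$ you need is exactly Lemma \ref{lem:d_H}(\textit{ii}). Your argument is longer but yields strictly more: it shows $h(t_{1},0)<1$ \emph{unconditionally} (whenever the critical point $t_{1}$ exists), with the hypothesis $c_{H,0}^{1}<1$ used only to force $h(1,0)>1$, and it produces an explicit formula for the critical value $h(t_{1},0)$. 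The paper's argument buys brevity and avoids any use of the defining equation for $x_{1}$ beyond $t_{1}\in(0,1)$.
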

\begin{proof}
Indeed, $h(t_{1},0)=t_{1}^{2H}-c_{H,0}^{1}t_{1}^{1/2+H}+t_{1}$, while
$h(1,0)=2-c_{H,0}^{1}$. Then the inequality $h(t_{1},0)<h(1,0)$ is equivalent
to the following one:
\begin{align*}
c_{H,0}^{1}\left (1-t_{1}^{1/2+H}\right ) &< 2-t_{1}-t_{1}^{2H}.
\end{align*}
If $c_{H,0}^{1}<1$, then
$c_{H,0}^{1} \bigl(1-t_{1}^{1/2+H} \bigr)<1-t_{1}^{1/2+H} < 1-t_{1} <
2-t_{1}-t_{1}^{2H}$.
\end{proof}

\begin{remark}%
\label{rem:tosufmax}
As it was mentioned before, $c_{1/2,0}^{1}=2$, and so for
$H = \frac{1}{2}$ the condition of Lemma \ref{lem:sufmaxat1} is not satisfied.
In this case $d_{1/2}(0)=1$, and $t_{1}=t_{2}=1$, so that we have the equality
$h(t_{1},0)=h(1,0)$.

However, the question what will be for $\gamma =0$ and
$H_{0}<H<\frac{1}{2}$, where $H_{0}$ is such a value that for
$0<H<H_{0}$, $c_{H,0}^{1}<1$, is open. In order to fill this gap, we provide
the numerical results with some comments.

Consider two fuctions $h(t_{1},\gamma )$ and $h(1,\gamma )$ as functions
of $\gamma $ and $H$. We already know that
$\mathop{\max }\limits _{t\in [0, 1]} h(t,\gamma ) = \max \{h(t_{1},
\gamma ),h(1,\gamma )\}$. The projection of the surface of
$\max \{h(t_{1},\gamma ),h(1,\gamma )\}$ on the $(H,\gamma )$-plane is presented
in Figure \ref{fig:surfaceprojection}. Points, where $h(t,\gamma )$ reaches
its maximum at $t=1$ are represented in green color, and points where
$h(t,\gamma )$ reaches its maximum at $t=t_{1}$ are represented in brown
color. The black curve is  the algebraic curve
$\gamma _{0}=\gamma _{0}(H)$, which is presented in Figure \ref{fig:DH}.

\begin{figure}[h]
\includegraphics{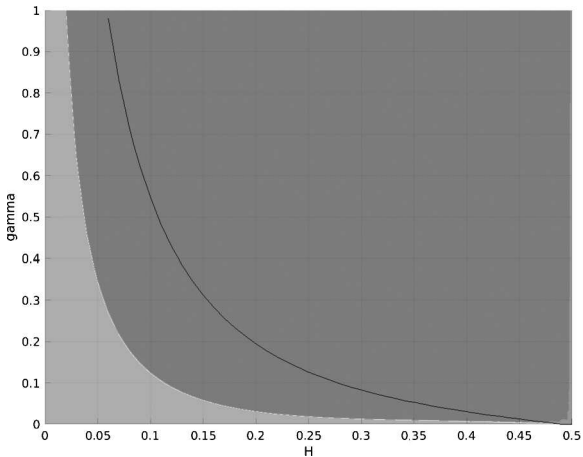}
\caption{The projection of the surface of
$\mathop{\max }\limits _{t\in [0, 1]} h(t,\gamma )$}
\label{fig:surfaceprojection}
\end{figure}
\end{remark}


\begin{appendix}
\section*{Appendix section}
\label{secA}

In the proof of Theorem \ref{thdistpowf}, we use these auxiliary results.

\begin{lemma}%
\label{lem:srtinc}
Let $f:\mathbb{R} \rightarrow \mathbb{R}$ be a strictly convex function
of one variable. Take the function
$g(x_{1}, x_{2}) = \frac{f(x_{1})-f(x_{2})}{x_{1}-x_{2}}$, $x_{1} \neq x_{2}$, $x_{1},x_{2} \in \mathbb{R}$. Then $g(x_{1}+\alpha ,x_{2}+\alpha )$ is strictly
increasing in $\alpha > 0$.
\end{lemma}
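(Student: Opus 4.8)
The plan is to exploit the symmetry $g(x_1,x_2)=g(x_2,x_1)$ to assume without loss of generality that $x_1>x_2$, and to set $h:=x_1-x_2>0$, so that
\[
g(x_1+\alpha,x_2+\alpha)=\frac{f(x_2+\alpha+h)-f(x_2+\alpha)}{h}.
\]
Fixing $\alpha<\beta$ and putting $a:=x_2+\alpha$ and $\delta:=\beta-\alpha>0$, the desired strict inequality $g(x_1+\beta,x_2+\beta)>g(x_1+\alpha,x_2+\alpha)$ becomes, after clearing the positive denominator $h$ and rearranging terms, equivalent to
\[
\frac{f(a+h+\delta)-f(a+h)}{\delta}>\frac{f(a+\delta)-f(a)}{\delta};
\]
that is, the secant slope of $f$ over $[a+h,\,a+h+\delta]$ strictly exceeds the secant slope over $[a,\,a+\delta]$ — two intervals of equal length $\delta$, the second obtained from the first by the rightward shift $h>0$.

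Next I would invoke the classical three-slope (three-chord) inequality: for a strictly convex $f$ and any $u<v<w$,
\[
\frac{f(v)-f(u)}{v-u}<\frac{f(w)-f(u)}{w-u}<\frac{f(w)-f(v)}{w-v},
\]
which follows immediately by writing $v$ as a proper convex combination of $u$ and $w$ and using strict convexity. Write $m(p,q):=\frac{f(q)-f(p)}{q-p}$ for $p\neq q$ (a symmetric quantity). Among the four points $a<a+h$ and $a+\delta<a+h+\delta$, the smallest is $a$ and the largest is $a+h+\delta$, while the order of the two middle points $a+h$ and $a+\delta$ depends on the sign of $h-\delta$. In either case I chain two applications of the three-slope inequality: if $\delta\le h$ then
\[
m(a,a+\delta)<m(a,a+h+\delta)<m(a+h,a+h+\delta),
\]
and if $\delta>h$ then
\[
m(a,a+\delta)<m(a+h,a+\delta)<m(a+h,a+h+\delta).
\]
Strict convexity makes every inequality strict (all triples involved consist of distinct points), and in both cases the outer terms give exactly $m(a,a+\delta)<m(a+h,a+h+\delta)$, which is the displayed inequality; hence $\alpha\mapsto g(x_1+\alpha,x_2+\alpha)$ is strictly increasing on $(0,\infty)$ (indeed on all of $\mathbb{R}$).

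The only genuine subtlety is the case $\delta>h$, where the two equal-length intervals $[a,a+\delta]$ and $[a+h,a+h+\delta]$ overlap, so the naïve slogan ``a secant farther to the right has larger slope'' does not apply verbatim; the remedy is precisely the case split above, which routes the comparison through the slope $m(a+h,a+\delta)$ over the common part. (If one were willing to assume $f\in C^{1}$, a one-line argument is available, since $\frac{d}{d\alpha}g(x_1+\alpha,x_2+\alpha)=\frac{f'(x_2+\alpha+h)-f'(x_2+\alpha)}{h}>0$ because strict convexity forces $f'$ to be strictly increasing; but I would keep the difference-quotient argument so as not to impose differentiability on $f$.)
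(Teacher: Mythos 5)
Your argument is correct. Note, however, that the paper itself gives no proof of this lemma: it is stated in the appendix and its proof is deferred to the textbook of Fichtenholz, so there is nothing in the paper to compare against line by line. Your reduction is exactly the standard one: after using the symmetry of $g$ to assume $x_{1}>x_{2}$ and writing $h=x_{1}-x_{2}$, the claim becomes the statement that for $\delta=\beta-\alpha>0$ one has $m(a,a+\delta)<m(a+h,a+h+\delta)$ where $m(p,q)=\frac{f(q)-f(p)}{q-p}$, and this follows from the three-chord inequality for strictly convex functions. One small remark: your case split on the sign of $\delta-h$ is unnecessary. The first chain
\begin{equation*}
m(a,a+\delta)<m(a,a+h+\delta)<m(a+h,a+h+\delta)
\end{equation*}
only uses the ordered triples $a<a+\delta<a+h+\delta$ and $a<a+h<a+h+\delta$, both of which consist of distinct points for any $h,\delta>0$ irrespective of how $h$ and $\delta$ compare; the pivot $m(a,a+h+\delta)$ is the slope over the union of the two intervals, so the overlap of $[a,a+\delta]$ and $[a+h,a+h+\delta]$ is harmless. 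Your version is not wrong, just longer than needed. Your closing observation that monotonicity in fact holds for all $\alpha\in\mathbb{R}$ (the restriction to $\alpha>0$ in the statement is immaterial) and your avoidance of any differentiability assumption on $f$ are both sound.
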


\begin{lemma}%
\label{lem:gamder}
Let $\Gamma (x)$ be the Gamma function.\index{Gamma function} Then
\begin{equation*}
(\ln \Gamma (x))' = \frac{\Gamma '(x)}{\Gamma (x)}=\int _{0}^{1}
\frac{1-t^{x-1}}{1-t} dt-C,
\end{equation*}
where $C$ is a fixed constant.
\end{lemma}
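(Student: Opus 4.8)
The statement to prove is Lemma~\ref{lem:gamder}, the classical integral representation for the digamma function.

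Here is my plan. The quantity $(\ln\Gamma(x))' = \Gamma'(x)/\Gamma(x)$ is the digamma function $\psi(x)$, and I want to show $\psi(x) = \int_0^1 \frac{1-t^{x-1}}{1-t}\,dt - C$ for a fixed constant $C$ (which is in fact the Euler--Mascheroni constant). The cleanest route is through the Gauss-type series representation of $\psi$. First I would recall the Weierstrass product $\Gamma(x)^{-1} = x e^{Cx}\prod_{n=1}^\infty \bigl(1+\tfrac{x}{n}\bigr) e^{-x/n}$, valid for all $x>0$, where $C$ is the Euler--Mascheroni constant. Taking logarithms gives $-\ln\Gamma(x) = \ln x + Cx + \sum_{n=1}^\infty\bigl(\ln(1+\tfrac{x}{n}) - \tfrac{x}{n}\bigr)$, and differentiating term by term (justified by locally uniform convergence of the differentiated series on compact subsets of $(0,\infty)$) yields $\psi(x) = -C - \tfrac{1}{x} + \sum_{n=1}^\infty\bigl(\tfrac{1}{n} - \tfrac{1}{n+x}\bigr) = -C + \sum_{n=0}^\infty\bigl(\tfrac{1}{n+1} - \tfrac{1}{n+x+1}\bigr)$.

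Next I would convert this series into the integral. The key elementary identity is $\frac{1}{n+1} - \frac{1}{n+x} = \int_0^1 (t^{n} - t^{n+x-1})\,dt$ for each $n \ge 0$ (here using $\int_0^1 t^{a}\,dt = \tfrac{1}{a+1}$). Summing over $n$ and interchanging the sum and the integral gives
\begin{equation*}
\psi(x) + C = \sum_{n=0}^\infty \int_0^1 (t^n - t^{n+x-1})\,dt = \int_0^1 \sum_{n=0}^\infty t^n(1 - t^{x-1})\,dt = \int_0^1 \frac{1-t^{x-1}}{1-t}\,dt,
\end{equation*}
where the geometric series is summed as $\sum_{n\ge 0} t^n = \tfrac{1}{1-t}$ for $t\in[0,1)$, and the integrand $\frac{1-t^{x-1}}{1-t}$ is bounded near $t=1$ (it tends to $x-1$) so the integral converges. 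This establishes the claim with $C$ the Euler--Mascheroni constant.

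The only genuine subtlety — the ``hard part'' — is justifying the two interchanges of limit operations: term-by-term differentiation of the Weierstrass series, and the exchange of summation with integration. For the first, one notes that the series $\sum_n\bigl(\tfrac{1}{n} - \tfrac{1}{n+x}\bigr) = \sum_n \tfrac{x}{n(n+x)}$ converges uniformly on any interval $[\delta, M]\subset(0,\infty)$, so differentiation under the sum is legitimate. For the second, the terms $t^n - t^{n+x-1} = t^n(1-t^{x-1})$ all have the same sign on $(0,1)$ for fixed $x$ (nonnegative if $x\ge 1$, nonpositive if $x<1$), so monotone convergence (or Tonelli's theorem) applies directly and the interchange is unconditional. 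Everything else is the routine bookkeeping indicated above. An alternative, if one prefers to avoid the Weierstrass product, is to start from $\ln\Gamma(x) = \lim_{n\to\infty}\bigl(\ln n!\, + x\ln n - \sum_{k=0}^n \ln(x+k)\bigr)$ (the Gauss limit), differentiate to get the finite-sum approximation to $\psi$, and pass to the limit; the integral conversion step is then identical.
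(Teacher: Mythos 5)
The paper does not actually prove this lemma: it simply states it and refers the reader to Fichtenholz for the proof, so there is no in-paper argument to compare against. Your derivation is the standard one and is essentially correct: Weierstrass product $\to$ series for the digamma function $\to$ termwise integral identity $\to$ geometric summation under the integral, with the two interchanges justified by locally uniform convergence and by Tonelli/monotone convergence (the constant-sign observation for $t^{n}(1-t^{x-1})$ is exactly the right point to make). Two small remarks. First, there is an indexing slip in the displayed series: the correct form is $\psi(x) = -C + \sum_{n=0}^{\infty}\bigl(\tfrac{1}{n+1}-\tfrac{1}{n+x}\bigr)$, not $\tfrac{1}{n+x+1}$ in the second term; your subsequent step uses the identity $\tfrac{1}{n+1}-\tfrac{1}{n+x}=\int_0^1(t^{n}-t^{n+x-1})\,dt$, which matches the corrected series, so the chain of equalities still closes — but the intermediate line as written would give $\psi(x+1)$. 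Second, for $0<x<1$ (which is the regime actually used in the paper, e.g.\ $x=H+\tfrac12$) the integrand is unbounded near $t=0$; it is still integrable since $x-1>-1$, and your Tonelli argument covers this, but it is worth saying explicitly alongside the remark about boundedness near $t=1$.
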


Proofs of Lemma 2 and Lemma 3 could be found in \cite{Fichtenholz}.

\begin{lemma}%
\label{lem:z-gam}
If $H \in \left (0,\frac{1}{2}\right )$, then the function
$z(\gamma ):=
\frac{\Gamma  (\gamma -H+\frac{3}{2} )}{\Gamma (\gamma +1)}$ is
increasing in $\gamma >0$.
\end{lemma}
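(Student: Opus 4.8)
The plan is to show that $z(\gamma) = \frac{\Gamma(\gamma - H + \frac{3}{2})}{\Gamma(\gamma+1)}$ has positive logarithmic derivative on $\gamma > 0$ when $H \in (0, \frac12)$. Taking logarithms, $\ln z(\gamma) = \ln\Gamma(\gamma - H + \frac32) - \ln\Gamma(\gamma+1)$, so by Lemma~\ref{lem:gamder},
\[
(\ln z(\gamma))' = \frac{\Gamma'(\gamma - H + \frac32)}{\Gamma(\gamma - H + \frac32)} - \frac{\Gamma'(\gamma+1)}{\Gamma(\gamma+1)} = \int_0^1 \frac{1 - t^{\gamma - H + \frac12}}{1-t}\,dt - \int_0^1 \frac{1 - t^{\gamma}}{1-t}\,dt = \int_0^1 \frac{t^{\gamma} - t^{\gamma - H + \frac12}}{1-t}\,dt.
\]
(The additive constant $C$ cancels.) So it suffices to prove this last integral is strictly positive.

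The key observation is that for $t \in (0,1)$ the integrand has a fixed sign: since $H \in (0,\frac12)$ we have $-H + \frac12 > 0$, hence the exponent $\gamma - H + \frac12 > \gamma$, and because $0 < t < 1$ raising to a larger positive power makes it smaller, i.e. $t^{\gamma - H + \frac12} < t^{\gamma}$. Thus the numerator $t^{\gamma} - t^{\gamma - H + \frac12} > 0$, and the denominator $1 - t > 0$, so the integrand is strictly positive on $(0,1)$; integrating a strictly positive continuous function over $(0,1)$ gives a strictly positive value, whence $(\ln z(\gamma))' > 0$ for every $\gamma > 0$, and therefore $z$ is increasing.

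One minor technical point to address is the integrability near $t = 1$: as $t \to 1^{-}$ the ratio $\frac{t^{\gamma} - t^{\gamma - H + \frac12}}{1 - t}$ tends to the finite limit $(H - \frac12) + \text{(bounded)}$ by l'Hôpital or by recognizing it as a difference quotient of $t \mapsto t^{a}$, so there is no singularity there; near $t = 0$ the integrand is bounded since both powers have positive exponents. Hence the integral is a genuine (proper or at worst harmlessly improper) integral and the sign argument is rigorous. This is essentially the same mechanism already used in the paper for the sign of $(\ln f(H))'$, and I expect no real obstacle beyond carefully invoking Lemma~\ref{lem:gamder} and noting the exponent inequality $\gamma - H + \frac12 > \gamma$.
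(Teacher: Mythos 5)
Your proof is correct, but it takes a different route from the paper's. The paper does not differentiate $\ln z$ at all: it observes that the increment $(\gamma - H + \tfrac{3}{2}) - (\gamma+1) = \tfrac{1}{2}-H$ is a positive constant, writes
\[
\ln z(\gamma) = \Bigl(\tfrac{1}{2}-H\Bigr)\,
\frac{\ln\Gamma\bigl(\gamma - H + \tfrac{3}{2}\bigr) - \ln\Gamma(\gamma+1)}{\bigl(\gamma - H + \tfrac{3}{2}\bigr) - (\gamma+1)},
\]
recognizes the fraction as a difference quotient of the strictly convex function $\ln\Gamma$ evaluated at two points translated together by $\gamma$, and invokes Lemma~\ref{lem:srtinc} to conclude that this quotient is increasing in $\gamma$; multiplying by the positive constant $\tfrac{1}{2}-H$ finishes the argument. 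You instead compute $(\ln z)'$ via the digamma integral representation of Lemma~\ref{lem:gamder} and check the sign of the integrand $\frac{t^{\gamma}-t^{\gamma-H+\frac{1}{2}}}{1-t}$ pointwise — which is exactly the mechanism the paper uses elsewhere, namely to show that $f(H)$ is decreasing in the proof of Theorem~\ref{thdistpowf}. Both arguments are complete and both rest on lemmas already stated in the appendix; the paper's version avoids any derivative computation or integrability discussion (it only needs strict convexity of $\ln\Gamma$), while yours is more self-contained in the sense that it does not need Lemma~\ref{lem:srtinc} and yields the slightly stronger quantitative statement that $(\ln z)'(\gamma)>0$ with an explicit integral formula. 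Your handling of the endpoint behaviour of the integral at $t=0$ and $t=1$ is also correct, so there is no gap.
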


\begin{proof}
For every $\gamma >0$ we have $z(\gamma )>0$. Let us calculate
\begin{align*}
\ln z(\gamma )&=\ln
\frac{\Gamma \left (\gamma -H+\frac{3}{2}\right )}{\Gamma (\gamma +1)}=
\\
&= \left (\frac{1}{2}-H\right )
\frac{\ln \Gamma \left (\gamma -H+\frac{3}{2}\right ) -
\ln \Gamma (\gamma +1)}{\left (\gamma -H+\frac{3}{2}\right )-(\gamma +1)}=:
\left (\frac{1}{2}-H\right )\omega (\gamma ).
\end{align*}
According to Lemma \ref{lem:srtinc} and the fact that
$\ln (\Gamma (x))$ is strictly convex we have that
$\omega (\gamma )$ is increasing. Since
$\left (\frac{1}{2} - H\right )>0$, it is clear that $z(\gamma )$ is increasing
in $\gamma >0 $.
\end{proof}

\begin{lemma}%
\label{lem:d_H}
Let
$d_{H}(\gamma )=c_{H}B(\gamma -H+\frac{3}{2},H+\frac{1}{2})(\gamma +1)$,
and
$x_{1}=d_{H}(\gamma )-\sqrt{d_{H}^{2}(\gamma )-2H}$, $x_{2}=d_{H}(
\gamma )+\sqrt{d_{H}^{2}(\gamma )-2H}$ be roots of the quadratic equation \eqref{eqquadrat}.
Then for all $\gamma >0$, $H\in \bigl(0,\frac{1}{2}\bigr)$ the following
statements hold.
\begin{itemize}
\item[\upshape\textit{i})] $d_{H}(\gamma )$ is increasing in $\gamma >0$.
\item[\upshape\textit{ii})] $x_{1}<\sqrt{2H}$ and $x_{2}>\sqrt{2H}$.
\item[\upshape\textit{iii})] $d_{H}(\gamma )>\frac{1}{2}+H \Leftrightarrow x_{2}>1$ and
$x_{1}<2H$.
\item[\upshape\textit{iv})] $d_{H}(\gamma )\rightarrow \infty $ as
$\gamma \rightarrow \infty $.
\end{itemize}
\end{lemma}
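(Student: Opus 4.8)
The plan is to reduce all four statements to the single scalar function $z(\gamma)=\Gamma(\gamma-H+\frac{3}{2})/\Gamma(\gamma+1)$ of Lemma~\ref{lem:z-gam}. Using $B(\gamma-H+\frac{3}{2},H+\frac{1}{2})=\Gamma(\gamma-H+\frac{3}{2})\Gamma(H+\frac{1}{2})/\Gamma(\gamma+2)$ and $\Gamma(\gamma+2)=(\gamma+1)\Gamma(\gamma+1)$, the factor $\gamma+1$ cancels and one obtains the identity $d_{H}(\gamma)=c_{H}\Gamma(H+\frac{1}{2})\,z(\gamma)$ with a strictly positive constant in front. Statement (i) is then immediate from Lemma~\ref{lem:z-gam}, and it also extends $d_H$ continuously to $\gamma=0$ with $z(0)=\Gamma(\frac32-H)$, which I will use below.

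For (iv) I would bound $z(\gamma)$ from below using log-convexity of $\Gamma$: with $\alpha=\frac{1}{2}-H\in(0,1)$ and $x=\gamma+1$, interpolating between the arguments $x$ and $x+1$ gives $\Gamma(x+\alpha)\ge\Gamma(x)^{1-\alpha}\Gamma(x+1)^{\alpha}=\Gamma(x)\,x^{\alpha}$, so that $z(\gamma)\ge(\gamma+1)^{1/2-H}\to\infty$; multiplying by $c_{H}\Gamma(H+\frac{1}{2})$ proves (iv). Equivalently, one may quote the asymptotics $\Gamma(\gamma+a)/\Gamma(\gamma+b)\sim\gamma^{a-b}$.

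Statements (ii) and (iii) run on Vieta's formulas for the quadratic \eqref{eqquadrat}, namely $x_{1}x_{2}=2H$ and $x_{1}+x_{2}=2d_{H}(\gamma)$, together with positivity of the discriminant $D(\gamma)=4(d_{H}^{2}(\gamma)-2H)$ from \eqref{eq:Dtr}. The latter I get by combining (i) with the boundary value $d_{H}(0)^{2}=2H\,f(H)$, where $f(H)=\Gamma(H+\frac{1}{2})(\Gamma(\frac{3}{2}-H))^{3}/\Gamma(2-2H)$, and the inequality $f(H)>1$ on $(0,\frac{1}{2})$ --- precisely the logarithmic-differentiation computation already carried out in the proof of Theorem~\ref{thdistpowf}. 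Once $D(\gamma)>0$, the two roots are real, distinct, and (their sum and product both being positive) strictly positive; from $x_{1}x_{2}=2H$ and $x_{1}\le x_{2}$ we read off $x_{1}<\sqrt{2H}<x_{2}$, and $\sqrt{2H}<1$ since $H<\frac{1}{2}$, which is (ii).

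For (iii), write $x_{2}>1\Leftrightarrow\sqrt{d_{H}^{2}(\gamma)-2H}>1-d_{H}(\gamma)$: if $d_{H}(\gamma)\ge1$ the right-hand side is nonpositive so the inequality holds, and $d_{H}(\gamma)>\frac{1}{2}+H$ holds as well because $\frac{1}{2}+H<1$; if $d_{H}(\gamma)<1$ both sides are positive and squaring yields $x_{2}>1\Leftrightarrow 2d_{H}(\gamma)>1+2H$. In either case $x_{2}>1\Leftrightarrow d_{H}(\gamma)>\frac{1}{2}+H$. Finally $x_{1}=2H/x_{2}$ by Vieta, whence $x_{1}<2H\Leftrightarrow x_{2}>1$, closing the chain of equivalences. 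Everything here is bookkeeping around Vieta's relations; the only genuinely analytic input is the discriminant positivity $f(H)>1$, and that inequality is already disposed of within the proof of the theorem via the factorization $3z^{2}-2z^{3}-1=-(1-z)^{2}(2z+1)<0$, so the main task in this lemma is simply to assemble those relations carefully.
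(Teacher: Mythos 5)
Your treatment of parts (\textit{i})--(\textit{iii}) is correct and essentially identical to the paper's: the same cancellation giving $d_{H}(\gamma)=c_{H}\Gamma(H+\frac{1}{2})\,z(\gamma)$ with Lemma~\ref{lem:z-gam}, the same reliance on the discriminant computation $D(0)=8H(f(H)-1)>0$ (equivalently $d_{H}^{2}(0)=2Hf(H)$) already established in the proof of Theorem~\ref{thdistpowf}, and the same two-case squaring argument for (\textit{iii}); where you invoke Vieta's product $x_{1}x_{2}=2H$, the paper writes $x_{1}=2H/x_{2}$ directly, which is the same thing.

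The one step that fails as written is the log-convexity bound in (\textit{iv}). Since $x+\alpha=(1-\alpha)x+\alpha(x+1)$ is a convex combination, convexity of $\ln\Gamma$ gives
$\Gamma(x+\alpha)\le\Gamma(x)^{1-\alpha}\Gamma(x+1)^{\alpha}=\Gamma(x)\,x^{\alpha}$,
i.e.\ an \emph{upper} bound on $z(\gamma)$, which is the opposite of what you need. To extract a lower bound by the same device, interpolate the integer point between the two shifted ones: from $x+1=\alpha(x+\alpha)+(1-\alpha)(x+\alpha+1)$ one gets $\Gamma(x+1)\le\Gamma(x+\alpha)(x+\alpha)^{1-\alpha}$, hence
$z(\gamma)=\Gamma(x+\alpha)/\Gamma(x)\ge x\,(x+\alpha)^{\alpha-1}\rightarrow\infty$
with $x=\gamma+1$, $\alpha=\frac{1}{2}-H$. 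Your fallback of simply quoting $\Gamma(\gamma+a)/\Gamma(\gamma+b)\sim\gamma^{a-b}$ is legitimate and is exactly what the paper does via Stirling's formula, so the conclusion of (\textit{iv}) stands; only the stated inequality needs to be reversed or replaced as above.
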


\begin{proof}
(\textit{i}) Note that $d_{H}(\gamma )$ is increasing in $\gamma >0$ since
\begin{equation*}
d_{H}(\gamma )=c_{H} \Gamma \left (H+\frac{1}{2}\right )
\frac{\Gamma \left (\gamma -H+\frac{3}{2}\right )}{\Gamma \left (\gamma +1\right )},
\end{equation*}
where $c_{H} \Gamma \left (H+\frac{1}{2}\right )>0$ for all
$H \in \left (0,\frac{1}{2}\right )$, and according to Lemma \ref{lem:z-gam},
$
\frac{\Gamma  (\gamma -H+\frac{3}{2} )}{\Gamma  (\gamma +1 )}$
is increasing in $\gamma >0$.

(\textit{ii}) Discriminant \eqref{eqdiscr} satisfies the following relation:
\begin{align*}
0<D\left (0\right ) &= 4\left (d_{H}^{2}\left (0\right )-2H\right ).
\end{align*}
Therefore, $d_{H}(\gamma )>\sqrt{2H}$ for all $\gamma >0$. Also, we can
rewrite $x_{1}=d_{H}(\gamma )-\sqrt{d_{H}^{2}(\gamma )-2H}$ and
$x_{2}=d_{H}(\gamma )+\sqrt{d_{H}^{2}(\gamma )-2H}$. Hence
$x_{2}>\sqrt{2H}$. Transform the value $x_{1}$ to the following form:
\begin{equation*}
x_{1}=
\frac{d_{H}^{2}(\gamma )-(d_{H}^{2}(\gamma )-2H)}{d_{H}(\gamma )+\sqrt{d_{H}^{2}(\gamma )-2H}}=
\frac{2H}{x_{2}}<\sqrt{2H}.
\end{equation*}

(\textit{iii}) Let us assume that $x_{2}>1$ (or, what is equivalent,
$x_{1}=\frac{2H}{x_{2}}<\break 2H$). In turn, this is equivalent to the relation
$d_{H}(\gamma )+\sqrt{d_{H}^{2}(\gamma )-2H}>1$, or
$\sqrt{d_{H}^{2}(\gamma )-2H}>1-d_{H}(\gamma )$. The latter inequality
can be realized in one of two cases:
%
\begin{equation}
\label{eqmox1}
d_{H}^{2}(\gamma )-2H>1-2d_{H}(\gamma )+d_{H}^{2}(\gamma ),\quad 1-d_{H}(
\gamma )\ge 0;
\end{equation}
or
%
\begin{equation}
\label{eqmox2}
1-d_{H}(\gamma )<0.
\end{equation}

The couple of inequalities \eqref{eqmox1} is equivalent to
$ 1\ge d_{H}(\gamma )>\frac{1}{2}+H$. Therefore, inequalities \eqref{eqmox1} and \eqref{eqmox2}, taken together, indicate that
$d_{H}(\gamma )>\frac{1}{2}+H$ if and only if $x_{2}>1$.

(\textit{iv}) The value $d_{H}(\gamma )$ can be presented as
\begin{equation*}
d_{H}(\gamma )=c_{H}
\frac{\Gamma \left (\gamma -H+\frac{3}{2}\right )\Gamma \left (H+\frac{1}{2}\right )}{\Gamma \left (\gamma +2\right )}
\left (\gamma +1\right ),
\end{equation*}
where $c_{H}\Gamma \left (H+\frac{1}{2}\right )>0$ is a fixed constant, and for
all $H \in \left (0,\frac{1}{2}\right )$, 
\begin{align*}
\frac{\Gamma \left (\gamma -H+\frac{3}{2}\right )}{\Gamma \left (\gamma +2\right )}
\left (\gamma +1\right )&=
\frac{\Gamma \left (\gamma -H+\frac{3}{2}\right )}{\Gamma \left (\gamma +1\right )}
\\
&=
\frac{\sqrt{\frac{2\pi }{\gamma -H+\frac{3}{2}}}\left (\frac{\gamma -H+\frac{3}{2}}{e}\right )^{\gamma -H+\frac{3}{2}}\left (1+O\left (\frac{1}{\gamma -H+\frac{3}{2}}\right )\right )}{\sqrt{\frac{2\pi }{\gamma +1}}\left (\frac{\gamma +1}{e}\right )^{\gamma +1}\left (1+O\left (\frac{1}{\gamma +1}\right )\right )}
\\
&\sim \frac{1}{e^{\frac{1}{2}-H}}
\frac{\left (\gamma -H+\frac{3}{2}\right )^{\gamma -H+1}}{\left (\gamma +1\right )^{\gamma +\frac{1}{2}}}
\rightarrow \infty , \quad \gamma \rightarrow \infty ,
\end{align*}
%
which follows from the Stirling's approximation for Gamma function.\index{Gamma function}
\end{proof}
\end{appendix}



\end{document}